\documentclass{article}
\usepackage{hyperref}
\usepackage{cite}
\usepackage{amsfonts}
\usepackage{amsthm}
\usepackage{amsmath}
\usepackage{amssymb}
\usepackage{mathtools} 

\usepackage{authblk} 

\usepackage{xcolor}
\usepackage{physics} 

\usepackage{epsfig}

\usepackage{soul} 

\usepackage{subcaption}
\usepackage{graphicx}
\usepackage{wrapfig}
\usepackage{tikz}
\usepackage{pgfplots}
\usepackage{pgfplotstable}
\usepackage{tkz-fct}
\usetikzlibrary{pgfplots.polar}
\pgfplotsset{compat=newest}
\usepgfplotslibrary{fillbetween}
\usetikzlibrary{patterns}
\usepackage[top=1in, bottom=1in, left=1in, right=1in]{geometry}

\usepackage{mathrsfs}
\usepackage{float}
\hypersetup{colorlinks, linkcolor=red, 
filecolor=darkgreen, urlcolor=blue, citecolor=blue}
\theoremstyle{plain}
\newtheorem{Theorem}{Theorem}
\newtheorem{Lemma}{Lemma}

\newtheorem{Corollary}{Corollary}
\newtheorem{Remark}{Remark}
\theoremstyle{Definition}
\newtheorem{Definition}{Definition}

\numberwithin{equation}{section}

\definecolor{pomcol}{rgb}{1,0,0}
\definecolor{dbcol}{rgb}{0,0,0.8}

\makeatletter
\newcommand{\subjclass}[2][]{
	\let\@oldtitle\@title%
	\gdef\@title{\@oldtitle\footnotetext{#1 
			\emph{MSC $(2020):$} #2}}
}
\newcommand{\keywords}[1]{
	\let\@@oldtitle\@title%
	\gdef\@title{\@@oldtitle\footnotetext{\emph{Keywords:} #1}}
}
\makeatother

\begin{document}
	
\title{Some $m$-Fold Symmetric Bi-Univalent Function Classes and Their Associated Taylor-Maclaurin Coefficient Bounds}
\date{}

\author[1,2,3]{Hari Mohan Srivastava} 
\author[4]{Pishtiwan Othman Sabir} 
\author[5]{Sevtap Sümer Eker} 
\author[6]{Abbas Kareem Wanas} 
\author[7]{Pshtiwan Othman Mohammed\thanks{Corresponding author. 
Email: \texttt{pshtiwansangawi@gmail.com}}}
\author[8]{Nejmeddine Chorfi} 
\author[9,10,11]{Dumitru Baleanu\thanks{Corresponding author. Email: \texttt{dumitru.baleanu@gmail.com}}}
	
\affil[1]{{\small Department of Mathematics and Statistics, University of Victoria, Victoria, British Columbia V8W \; 3R4, Canada}}
\affil[2]{{\small Section of Mathematics, International Telematic University Uninettuno, I-00186 Rome, Italy}}
\affil[3]{{\small Center for Converging Humanities, Kyung Hee University, 26 Kyungheedae-ro, Dongdaemun-gu, Seoul 02447, Republic of Korea}}
\affil[4]{{\small Department of Mathematics, College of Science, University of Sulaimani, Sulaimani 46001, Kurdistan Region, Iraq}}
\affil[5]{{\small Department of Mathematics, Faculty of Science, Dicle University, TR-21280, Diyarbakır, Tutkey}}
\affil[6]{{\small Department of Mathematics, College of Science, University of Al-Qadisiyah, Al-Qadisiyah, Al Diwaniyah 58001, Iraq}}
\affil[7]{{\small Department of Mathematics, College of Education, University of Sulaimani, Sulaimani 46001, Kurdistan Region, Iraq}}
\affil[8]{{\small Department of Mathematics, College of Science, King Saud University, P.O. Box 2455, Riyadh 11451, Saudi Arabia}}
\affil[9]{\small Department of Computer Science and Mathematics, Lebanese American University, Beirut 11022801, Lebanon}
\affil[10]{\small Institute of Space Sciences, R76900 Magurele-Bucharest, Romania}
\affil[11]{\small Department of Medical Research, China Medical University, Taichung 40402, Taiwan}

\keywords{Analytic and univalent functions; $m$-fold symmetric univalent functions; $m$-fold symmetric bi-univalent functions; coefficient bounds; Ruscheweyh derivative}
\subjclass{30C45; 30C50; 30C80; 26A51}
	
\maketitle

\begin{abstract}
The Ruscheweyh derivative operator is used in this paper to introduce and investigate interesting general subclasses of the function class $\Sigma_{\mathrm{m}}$ of $m$-fold symmetric bi-univalent analytic functions. Estimates of the initial Taylor-Maclaurin coefficients $\left|a_{m+1}\right|$ and $\left|a_{2 m+1}\right|$ are obtained for functions of the subclasses introduced in this study, and the consequences of the results are discussed. The results presented would generalize and improve on some recent works by many earlier authors. In some cases, our estimates are better than the existing coefficient bounds. Furthermore, within the engineering domain, this paper delves into a series of complex issues related to analytic functions, $m$-fold symmetric univalent functions, and the utilization of the Ruscheweyh derivative operator. These problems encompass a broad spectrum of engineering applications, including the optimization of optical system designs, signal processing for antenna arrays, image compression techniques, and filter design for control systems. The paper underscores the crucial role of these mathematical concepts in addressing practical engineering dilemmas and fine-tuning the performance of various engineering systems. It emphasizes the potential for innovative solutions that can significantly enhance the reliability and effectiveness of engineering applications.

\end{abstract}


\section{Introduction}

Let $\mathcal{A}$ denote the class of the functions $f$ that are analytic in the open unit disk $\mathbb{U}=\{z \in \mathbb{C}:|z|<1\}$, normalized by the conditions $f(0)=f^{\prime}(0)-1=0$ of the Taylor-Maclaurin series expansion

\begin{equation}\label{1.1}
f(z)=z+\sum_{n=2}^{\infty} a_{k} z^{k}
\end{equation}

Assume $\mathcal{S}$ is the subclass of $\mathcal{A}$ that contains all univalent functions in $\mathbb{U}$ of the form (\ref{1.1}), and $\mathcal{P}$ is the subclass of all functions $h(z)$ of the form

\begin{equation}\label{1.2}
h(z)=1+h_{1} z+h_{2} z^{2}+h_{3} z^{3}+\cdots
\end{equation}

\noindent which is analytic in the open unit disk $\mathbb{U}$ and $\operatorname{Re}(h(z))>0, z \in \mathbb{U}$.

For a function $f \in \mathcal{A}$ defined by (\ref{1.1}), the Ruscheweyh derivative operator \cite{A1} is defined by

$$
\mathcal{R}^{\delta} f(z)=z+\sum_{k=2}^{\infty} \Omega(\delta, k) a_{k} z^{k},
$$

\noindent where $\delta \in \mathbb{N}_{0}=\{0,1,2, \ldots\}=\mathbb{N} \cup\{0\}, z \in \mathbb{U}$, and

$$
\Omega(\delta, k)=\frac{\Gamma(\delta+k)}{\Gamma(k) \Gamma(\delta+1)}.
$$

The Koebe 1/4-theorem \cite{A2} asserts that every univalent function $f \in \mathcal{S}$ has an inverse $f^{-1}$ defined by

$$
  f^{-1}(f(z))=z \quad(z \in \mathbb{U}) \text { and } f\left(f^{-1}(w)\right)=w \quad\left(|w|<r_{0}(f), r_{0}(f) \geqq \frac{1}{4}\right).
$$

The inverse function $g=f^{-1}$ has the form

\begin{equation} \label{1.3}
g(w)=f^{-1}(w)=w-a_{2} w^{2}+\left(2 a_{2}^{2}-a_{3}\right) w^{3}-\left(5 a_{2}^{3}-5 a_{2} a_{3}+a_{4}\right) w^{4}+\cdots.
\end{equation}

A function $f \in \mathcal{A}$ is said to be bi-univalent if both $f$ and $f^{-1}$ are univalent. The class of bi-univalent functions in $\mathbb{U}$ is denoted by $\Sigma$. The following are some examples of functions in the class $\Sigma$:

$$
\frac{z}{1-z}, \quad-\log (1-z) \quad \text {and} \quad \frac{1}{2} \log \left(\frac{1+z}{1-z}\right),
$$

\noindent with the corresponding inverse functions:

$$
\frac{e^{w}-1}{e^{w}}, \quad \frac{w}{1+w} \quad \text { and } \quad \frac{e^{2 w}-1}{e^{2 w}+1},
$$

\noindent respectively.

The estimates on the bounds of the Taylor-Maclaurin coefficients $|a_{n}|$ is an important concern problem in geometric function theory because it provides information about the geometric properties of these functions. Lewin \cite{A3} studied the class $\Sigma$ of bi-univalent functions and discovered that $\left|a_{2}\right|<1.51$ for the functions belonging to the class $\Sigma$. Later on, Brannan and Clunie \cite{A4} conjectured that $\left|a_{2}\right| \leqq \sqrt{2}$. Subsequently, Netanyahu \cite{A5} showed that max $\left|a_{2}\right|=4/3$ for $f \in$ $\Sigma$. There have been numerous recent works devoted to studying the bi-univalent functions class $\Sigma$ and obtaining nonsharp bounds on the Taylor-Maclaurin coefficients $\left|a_{2}\right|$ and $\left|a_{3}\right|$. In fact, in their oiponeering work, Srivastava et al. \cite{A6} revived as well as significantly improved the study of the analytic and bi-univalent function class $\Sigma$ in recent years. They also discovered bounds on $\left|a_{2}\right|$ and $\left|a_{3}\right|$, and they were followed by such authors (see, for example, \cite{A28,A23,A7,A8,A9} and references therein). The coefficient estimates on the bounds of $\left|a_{n}\right|$ $(n \in\{4,5,6, \ldots\})$ for a function $f \in \Sigma$ defined by (\ref{1.1}) remains an unsolved problem. In fact, for coefficients greater than three, there is no natural way to obtain an upper bound. There are a few articles where the Faber polynomial techniques were used to find upper bounds for higher-order coefficients (see, for example, \cite{A11,A12,A131,A132,A133}).

For each function $f \in \mathcal{S}$, the function

\begin{equation}\label{1.4}
h(z)=\left(f\left(z^{m}\right)\right)^{\frac{1}{m}}, \quad(z \in \mathbb{U}, m \in \mathbb{N})
\end{equation}

\noindent is univalent and maps the unit disk into a region with $m$-fold symmetry. A function $f$ is said to be $m$-fold symmetric (see \cite{A14}) and is denoted by $\mathcal{A}_{m}$, if it has the following normalized form:

\begin{equation}\label{1.5}
f(z)=z+\sum_{k=1}^{\infty} a_{m k+1} z^{m k+1}, \quad(z \in \mathbb{U}, m \in \mathbb{N}).
\end{equation}

Assume $\mathcal{S}_{m}$ denotes the class of $m$-fold symmetric univalent functions in $\mathbb{U}$ that are normalized by the series expansion (\ref{1.5}). In fact, the functions in class $\mathcal{S}$ are 1-fold symmetric. According to Koepf \cite{A14}, the $m$-fold symmetric function $h \in \mathcal{P}$ has the form

\begin{equation}\label{1.6}
h(z)=1+h_{m} z^{m}+h_{2 m} z^{2 m}+h_{3 m} z^{3 m}+\cdots.
\end{equation}

Analogous to the concept of $m$-fold symmetric univalent functions, Srivastava et al. \cite{A15} defined the concept of $m$-fold symmetric bi-univalent function in a direct way. Each function $f \in \Sigma$ generates an $m$-fold symmetric bi-univalent function for each $m \in \mathbb{N}$. The normalized form of $f$ is given as (\ref{1.5}) and the extension $g=f^{-1}$ is given by as follows:

\begin{multline}\label{1.7}
g(w)=w-a_{m+1} w^{m+1}+\left[(m+1) a_{m+1}^{2}-a_{2 m+1}\right] w^{2 m+1}- \\
\left[\frac{1}{2}(m+1)(3 m+2) a_{m+1}^{3}-(3 m+2) a_{m+1} a_{2 m+1}+a_{3 m+1}\right] w^{3 m+1}+\cdots.
\end{multline}

We denote the class of $m$-fold symmetric bi-univalent functions in $\mathbb{U}$ by $\Sigma_{\mathrm{m}}$. For $m=1$, the series (\ref{1.7}) coincides with the series expansion (\ref{1.3}) of the class $\Sigma$. Following are some examples of $m$-fold symmetric bi-univalent functions:

$$
\left[\frac{z^{m}}{1-z^{m}}\right]^{\frac{1}{m}}, \quad\left[-\log \left(1-z^{m}\right)\right]^{\frac{1}{m}} \quad \text { and } \quad\left[\frac{1}{2} \log \left(\frac{1+z^{m}}{1-z^{m}}\right)\right]^{\frac{1}{m}},
$$

\noindent with the corresponding inverse functions:

$$
\left(\frac{w^{m}}{1+w^{m}}\right)^{\frac{1}{m}}, \quad\left(\frac{e^{w^{m}}-1}{e^{w^{m}}}\right)^{\frac{1}{m}} \quad \text { and } \quad\left(\frac{e^{2 w^{m}}-1}{e^{2 w^{m}}+1}\right)^{\frac{1}{m}},
$$

\noindent respectively.

Recently, authors have expressed an interest in studying the $m$-fold symmetric bi-univalent functions class $\Sigma_{\mathrm{m}}$ (see, for example, \cite{A22,A16,A17,A17a,A19}) and obtaining non-sharp bounds estimates on the first two Taylor-Maclaurin coefficients $\left|a_{m+1}\right|$ and $\left|a_{2 m+1}\right|$.

For a function $f \in \mathcal{A}_m$ defined by (\ref{1.5}), one can think of the $m$-fold Ruscheweyh derivative operator $\mathcal{R}^\delta: \mathcal{A}_m \rightarrow \mathcal{A}_m$, which is analogous to the Ruscheweyh derivative $\mathcal{R}^\delta: \mathcal{A} \rightarrow \mathcal{A}$, and can define as follows:

$$
\mathcal{R}^{\delta} f(z)=z+\sum_{k=1}^{\infty} \frac{\Gamma(\delta+k+1)}{\Gamma(k+1) \Gamma(\delta+1)} a_{m k+1} z^{m k+1}, \quad\left(\delta \in \mathbb{N}_{0}, m \in \mathbb{N}, z \in \mathbb{U}\right).
$$

The optimization of optical systems presents engineers with the daunting task of achieving peak performance. Describing complex wavefronts necessitates the utilization of analytic and univalent functions. The critical challenge here lies in identifying functions that are not only univalent but also tailored to meet specific optical constraints, making optical system design a formidable endeavor. In the realm of signal processing for antenna arrays, the use of $m$-fold symmetric univalent functions is pivotal in beamforming. Engineers must navigate the complex landscape of electromagnetic waves to determine functions that can efficiently shape radiation patterns and combat interference, demanding both innovation and precision (see, for example, \cite{r26, r27}).

Within control systems engineering, univalent functions find application in filter design, where attaining the desired frequency response must harmonize with system stability and minimal phase distortion. Achieving this equilibrium remains a continual challenge for engineers. Modeling complex mechanical systems introduces yet another layer of engineering challenges. The Ruscheweyh derivative operator comes into play, providing a tool for analyzing functions that represent system dynamics. The results of this analysis can identify critical system parameters and drive system performance optimization. In the domain of robotics, the application of univalent functions extends to the control of robotic manipulators. Engineers must navigate the intricacies of ensuring precise and efficient control while adhering to constraints related to joint angles and velocities. Image compression and transmission are integral aspects of modern communication systems, requiring the development of efficient algorithms. Here, $m$-fold symmetric bi-univalent functions emerge as potential tools to create algorithms that optimize compression ratios while preserving image quality. Striking a delicate balance between these objectives remains an ongoing engineering challenge (see, for example, \cite{r28, r29}).

The object of this paper is to introduce new general subclasses of $m$-fold symmetric bi-univalent functions in $\mathbb{U}$ applying the $m$-fold Ruscheweyh derivative operator and obtain estimates on initial coefficients $\left|a_{m+1}\right|$ and $\left|a_{2 m+1}\right|$ for functions in subclasses $\mathcal{Q}_{\Sigma_{m}}(\tau, \lambda, \gamma, \delta ; \alpha)$ and $\Theta_{\Sigma_{m}}(\tau, \lambda, \gamma, \delta ; \beta)$ and improve some recent works of many authors. Moreover, the paper aims to provide valuable insights into the resolution of application challenges within the context of engineering, with the ultimate goal of advancing our understanding and capabilities in the above critical areas.

In order to derive our main results, we need to use the following lemma that will be useful in proving the basic theorems of section 2 and section 3.

\begin{Lemma}\label{lem1}\cite{A2}. If $h \in \mathcal{P}$ with $h(z)$ given by \eqref{1.2}, then $\left|h_{k}\right| \leqq 2$ for each $k \in \mathbb{N}$.
\end{Lemma}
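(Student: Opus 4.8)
The plan is to establish this classical Carathéodory-type bound by extracting each coefficient $h_k$ as a Cauchy/Fourier integral over a circle $|z|=r<1$, and then converting that integral into an estimate for the mean of the positive harmonic function $\operatorname{Re}(h)$. First, fix $r\in(0,1)$ and set $z=re^{i\theta}$. Since $h$ is analytic in $\mathbb{U}$ with the expansion \eqref{1.2}, the series converges uniformly on $|z|=r$, and the standard coefficient formula gives, for each $k\in\mathbb{N}$,
$$
h_k\, r^k = \frac{1}{2\pi}\int_0^{2\pi} h(re^{i\theta})\, e^{-ik\theta}\, d\theta .
$$

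The key observation is that integrating the conjugate $\overline{h(re^{i\theta})}$ against $e^{-ik\theta}$ produces zero for every $k\geq 1$, because $\overline{h}$ carries only nonpositive Fourier frequencies; concretely, this follows from the orthogonality relation $\int_0^{2\pi} e^{-i(n+k)\theta}\, d\theta = 0$ for all integers $n\geq 0$ and $k\geq 1$. Adding this vanishing integral to the identity above replaces $h$ by $2\operatorname{Re}(h)$:
$$
h_k\, r^k = \frac{1}{\pi}\int_0^{2\pi} \operatorname{Re}\!\big(h(re^{i\theta})\big)\, e^{-ik\theta}\, d\theta .
$$

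Next I would take absolute values, and here the hypothesis enters decisively: since $\operatorname{Re}(h)>0$ throughout $\mathbb{U}$, the weight $\operatorname{Re}(h(re^{i\theta}))$ is nonnegative, so pulling the modulus inside gives
$$
|h_k|\, r^k \leqq \frac{1}{\pi}\int_0^{2\pi} \operatorname{Re}\!\big(h(re^{i\theta})\big)\, d\theta .
$$
The mean-value property of the harmonic function $\operatorname{Re}(h)$ evaluates the remaining integral as $2\operatorname{Re}(h(0)) = 2$, using that the constant term of \eqref{1.2} is $1$. Finally, letting $r\to 1^{-}$ removes the factor $r^k$ and yields $|h_k|\leqq 2$ for every $k\in\mathbb{N}$.

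The computation is entirely routine, and no genuine obstacle arises beyond bookkeeping, since this is a foundational estimate. The only points requiring a word of care are the term-by-term integration (justified by uniform convergence on each circle $|z|=r<1$) and the vanishing of the conjugate integral. The essential mechanism is that positivity of the real part converts a statement about an arbitrary Taylor coefficient into a bound on the invariant mean of a positive harmonic function, which is pinned down by the normalization $h(0)=1$.
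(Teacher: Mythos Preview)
Your argument is correct and is essentially the classical Carath\'eodory proof: extract $h_k$ as a Fourier coefficient, exploit the vanishing of the negative-frequency part to replace $h$ by $2\operatorname{Re}(h)$, and then bound by the harmonic mean value at the origin. Every step is justified as written.

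Note, however, that the paper does not supply its own proof of this lemma at all; it merely quotes the result from Duren's monograph \cite{A2} and uses it as a black box in the proofs of Theorems~\ref{th1} and~\ref{th2}. So there is no ``paper's proof'' to compare against---you have provided a self-contained proof where the authors were content to cite the literature. Your write-up would serve perfectly well as a textbook verification of the cited inequality.
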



\section{Coefficient Bounds for the Function Class $\mathcal{Q}_{\Sigma_{m}}(\delta, \lambda, \gamma, n ; \alpha)$}

In this section, we assume that

$$
\lambda \geqq 0, \; 0 \leqq \gamma \leqq 1, \; 0<\alpha \leqq 1, \; \tau \in \mathbb{C} \backslash\{0\}, \; \delta \in \mathbb{N}_{0} \; \text { and } \; m \in \mathbb{N}.
$$

For a function $h \in \mathcal{P}$ given by (\ref{1.2}). If $\mathcal{K}(z)$ be any complex-valued function such that $\mathcal{K}(z)=[h(z)]^{\alpha}$, then

$$
|\arg (\mathcal{K}(z))|=\alpha|\arg (h(z))|<\frac{\alpha \pi}{2}.
$$

\begin{Definition} A function $f \in \Sigma_{m}$ given by (\ref{1.5}) is called in the class $Q_{\Sigma_{m}}(\tau, \lambda, \gamma, \delta ; \alpha)$ if it satisfies the conditions:
\begin{multline}\label{2.1}
\left|\arg \left(1+\frac{1}{\tau}\left[(1-\lambda)(1-\gamma) \frac{\mathcal{R}^{\delta} f(z)}{z}\right.\right.\right.\left.\left.\left.+(\lambda(\gamma+1)+\gamma)\left(\mathcal{R}^{\delta} f(z)\right)^{\prime}+\right.\right.\right.\\
\left.\left.\left.\lambda \gamma\left(z\left(\mathcal{R}^{\delta} f(z)\right)^{\prime \prime}-2\right)-1\right]\right)\right|<\frac{\alpha \pi}{2},
\end{multline}

\noindent and

\begin{multline}\label{2.2}
 \left|\arg \left(1+\frac{1}{\tau}\left[(1-\lambda)(1-\gamma) \frac{\mathcal{R}^{\delta} g(w)}{z}\right.\right.\right.\left.\left.\left.+(\lambda(\gamma+1)+\gamma)\left(\mathcal{R}^{\delta} g(w)\right)^{\prime}+\right.\right.\right.\\
 \left.\left.\left.\lambda \gamma\left(w\left(\mathcal{R}^{\delta} g(w)\right)^{\prime \prime}-2\right)-1\right]\right)\right|  <\frac{\alpha \pi}{2},
\end{multline}

\noindent where $z, w \in \mathbb{U}$ and the function $g=f^{-1}$ is given by (\ref{1.7}).
\end{Definition}

\begin{Theorem}\label{th1}
Let $f \in Q_{\Sigma_{m}}(\tau, \lambda, \gamma, \delta ; \alpha)$ be given by (\ref{1.5}). Then

\begin{equation}\label{2.3}\small{
\left|a_{m+1}\right| \leqq \frac{2 \sqrt{2}|\tau| \alpha}{\sqrt{(\delta+1)\left|\tau \alpha(\delta+2)(m+1) \Phi_{1}(\lambda, \gamma, m)+2(1-\alpha)(\delta+1) \Phi_{2}(\lambda, \gamma, m)\right|}}},
\end{equation}

\noindent and

\begin{equation}\label{2.4}
\left|a_{2 m+1}\right| \leqq \frac{2|\tau| \alpha}{(\delta+1)(\delta+2) \Phi_{1}(\lambda, \gamma, m)}+\frac{2|\tau|^{2} \alpha^{2}(m+1)}{(\delta+1)^{2} \Phi_{2}(\lambda, \gamma, m)},
\end{equation}

\noindent where

$$
\Phi_{1}(\lambda, \gamma, m)=1+2(\lambda+\gamma) m+\lambda \gamma\left((2 m+1)^{2}+1\right),
$$

\noindent and

$$
\Phi_{2}(\lambda, \gamma, m)=\left(1+(\lambda+\gamma) m+\lambda \gamma\left((m+1)^{2}+1\right)\right)^{2}.
$$

\end{Theorem}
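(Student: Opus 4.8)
\emph{Proof proposal.} The plan is to convert the two sectorial conditions \eqref{2.1} and \eqref{2.2} into Carath\'eodory data and then compare Taylor coefficients. Since the bracketed expression in \eqref{2.1} lies in the sector of half-angle $\alpha\pi/2$, there is a function $h\in\mathcal P$ with the $m$-fold expansion $h(z)=1+h_mz^m+h_{2m}z^{2m}+\cdots$ (as in \eqref{1.6}) such that this expression equals $[h(z)]^{\alpha}$; likewise \eqref{2.2} furnishes $k\in\mathcal P$, $k(w)=1+k_mw^m+k_{2m}w^{2m}+\cdots$, with the corresponding expression for $g=f^{-1}$ equal to $[k(w)]^{\alpha}$.

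First I would expand $\mathcal R^{\delta}f$ from its defining series, using $\Gamma(\delta+2)/[\Gamma(2)\Gamma(\delta+1)]=\delta+1$ and $\Gamma(\delta+3)/[\Gamma(3)\Gamma(\delta+1)]=(\delta+1)(\delta+2)/2$, and then form the three ingredients $\mathcal R^{\delta}f(z)/z$, $(\mathcal R^{\delta}f(z))'$ and $z(\mathcal R^{\delta}f(z))''$ to second active order in $z$. Substituting into the combination $(1-\lambda)(1-\gamma)(\cdot)+(\lambda(\gamma+1)+\gamma)(\cdot)+\lambda\gamma(\cdot)$ of \eqref{2.1}, I expect the zeroth-order term to cancel against the $-1$ and $-2$ adjustments, while the coefficients of $z^m$ and $z^{2m}$ collapse, after routine algebra, to $\tfrac1\tau(\delta+1)\psi\,a_{m+1}$ and $\tfrac1{2\tau}(\delta+1)(\delta+2)\Phi_1\,a_{2m+1}$, where $\psi:=1+(\lambda+\gamma)m+\lambda\gamma((m+1)^2+1)$ satisfies $\psi^2=\Phi_2$. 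The main obstacle is exactly this bookkeeping: verifying that the two messy polynomials in $\lambda,\gamma,m$ simplify to precisely $\sqrt{\Phi_2}$ and $\Phi_1$ (and that the constant term vanishes). The identical computation applied to $g$, whose active coefficients are $-a_{m+1}$ and $(m+1)a_{m+1}^2-a_{2m+1}$ by \eqref{1.7}, reproduces the same structural constants.

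Next I would expand the right-hand sides via $(1+u)^{\alpha}=1+\alpha u+\tfrac{\alpha(\alpha-1)}2u^2+\cdots$, giving $z^m$-coefficient $\alpha h_m$ and $z^{2m}$-coefficient $\alpha h_{2m}+\tfrac{\alpha(\alpha-1)}2h_m^2$ for $[h(z)]^{\alpha}$, and the analogues for $[k(w)]^{\alpha}$. Matching the four coefficients gives, from the $z^m$ and $w^m$ relations, $\tfrac1\tau(\delta+1)\psi\,a_{m+1}=\alpha h_m$ and $-\tfrac1\tau(\delta+1)\psi\,a_{m+1}=\alpha k_m$, whence $h_m=-k_m$; the $z^{2m}$ and $w^{2m}$ relations supply two more equations. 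Adding these two and using $h_m^2=k_m^2$ together with the first relation to eliminate $h_m^2$ yields a closed form for $a_{m+1}^2$; taking moduli and applying Lemma~\ref{lem1} (so that $|h_{2m}+k_{2m}|\le4$) produces \eqref{2.3}. Subtracting the same two equations isolates $a_{2m+1}=\tfrac{m+1}2a_{m+1}^2+\dfrac{\tau\alpha(h_{2m}-k_{2m})}{(\delta+1)(\delta+2)\Phi_1}$; bounding $|a_{m+1}^2|$ directly from the $z^m$ relation and $|h_{2m}-k_{2m}|$ via Lemma~\ref{lem1} then gives \eqref{2.4}. Everything after the coefficient-collapsing step of the second paragraph is elimination plus the triangle inequality, so that algebraic simplification is the one genuinely delicate point.
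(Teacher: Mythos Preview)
Your proposal is correct and follows essentially the same route as the paper's proof: introduce Carath\'eodory functions $p,q$ (your $h,k$) so that the two sectorial conditions become $[p(z)]^\alpha$ and $[q(w)]^\alpha$, expand both sides to order $2m$, match coefficients, then add the two second-order relations (substituting $p_m^2+q_m^2$ via the first-order relation) to isolate $a_{m+1}^2$, and subtract them (again feeding in the first-order information for $a_{m+1}^2$) to isolate $a_{2m+1}$, finishing each time with Lemma~\ref{lem1}. The only differences are notational---your $\psi$ is the paper's $\sqrt{\Phi_2}$ and your ``bound $|a_{m+1}^2|$ from the $z^m$ relation'' is exactly the paper's use of \eqref{2.18}.
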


\begin{proof}
It follows from (\ref{2.1}) and (\ref{2.2}) that
\begin{multline}\label{2.5}
1+\frac{1}{\tau}\bigg[(1-\lambda)(1-\gamma)\frac{\mathcal{R}^{\delta} f(z)}{z}+(\lambda(\gamma+1)+\gamma)\left(\mathcal{R}^{\delta} f(z)\right)^{\prime}+\\
\lambda \gamma\left(z\left(\mathcal{R}^{\delta} f(z)\right)^{\prime \prime}-2\right)-1\bigg]=[p(z)]^{\alpha},
\end{multline}

\noindent and
\begin{multline}\label{2.6}
1+\frac{1}{\tau}\bigg[(1-\lambda)(1-\gamma)\frac{\mathcal{R}^{\delta} g(w)}{z}+(\lambda(\gamma+1)+\gamma)\left(\mathcal{R}^{\delta} g(w)\right)^{\prime}+\\
\lambda \gamma\left(w\left(\mathcal{R}^{\delta} g(w)\right)^{\prime \prime}-2\right)-1\bigg]=[q(w)]^{\alpha},
\end{multline}

\noindent where $p, q \in \mathcal{P}$ have the following representations

\begin{equation}\label{2.7}
p(z)=1+p_{m} z^{m}+p_{2 m} z^{2 m}+p_{3 m} z^{3 m}+\cdots,
\end{equation}

\noindent and

\begin{equation}\label{2.8}
q(w)=1+q_{m} w^{m}+q_{2 m} w^{2 m}+q_{3 m} w^{3 m}+\cdots.
\end{equation}

Clearly, we have
\begin{multline}\label{2.9}
[p(z)]^{\alpha}=1+\alpha p_{m} z^{m}+\bigg(\frac{1}{2} \alpha(\alpha-1) p_{m}^{2}+\alpha p_{2 m}\bigg) z^{2 m}+\\
\bigg(\frac{1}{6} \alpha(\alpha-1)(\alpha-2) p_{m}^{3}+\alpha(1-\alpha) p_{m} p_{2 m}+\alpha p_{3 m}\bigg) z^{3 m}+\cdots,
\end{multline}

\noindent and

\begin{multline}\label{2.10}
[q(w)]^{\alpha}=1+\alpha q_{m} w^{m}+\bigg(\frac{1}{2} \alpha(\alpha-1) q_{m}^{2}+\alpha q_{2 m}\bigg) w^{2 m}+\\
\bigg(\frac{1}{6} \alpha(\alpha-1)(\alpha-2) q_{m}^{3 m}+\alpha(1-\alpha) q_{m} q_{2 m}+\alpha q_{3 m}\bigg) w^{3 m}+\cdots.
\end{multline}

We also find that

\begin{multline}\label{2.11}
1+\frac{1}{\tau}\bigg[(1-\lambda)(1-\gamma) \frac{\mathcal{R}^{\delta} f(z)}{z}+(\lambda(\gamma+1)+\gamma)\big(\mathcal{R}^{\delta} f(z)\big)^{\prime}+\lambda \gamma\big(z\big(\mathcal{R}^{\delta} f(z)\big)^{\prime \prime}-2\big)-1\bigg] =\\
1+\frac{\big(1+m(\lambda+\gamma)+\lambda \gamma\big((m+1)^{2}+1\big)\big)(\delta+1)}{\tau} a_{m+1} z^{m}+ \\
\frac{\big(1+2 m(\lambda+\gamma)+\lambda \gamma\big((2 m+1)^{2}+1\big)\big)(\delta+1)(\delta+2)}{2 \tau} a_{2 m+1} z^{2 m}+\cdots,
\end{multline}

\noindent and

\begin{multline}\label{2.12}
1+\frac{1}{\tau}\bigg[(1-\lambda)(1 -\gamma) \frac{\mathcal{R}^{\delta} g(w)}{z}+(\lambda(\gamma+1)+\gamma)\big(\mathcal{R}^{\delta} g(w)\big)^{\prime}+\lambda \gamma\big(w\big(\mathcal{R}^{\delta} g(w)\big)^{\prime \prime}-2\big)-1\bigg]= \\
\quad 1-\frac{\big(1+m(\lambda+\gamma)+\lambda \gamma\big((m+1)^{2}+1\big)\big)(\delta+1)}{\tau} a_{m+1} w^{m}+ \\
\frac{\big(1+2 m(\lambda+\gamma)+\lambda \gamma\big((2 m+1)^{2}+1\big)\big)(\delta+1)(\delta+2)}{2 \tau}\big[(m+1) a_{m+1}^{2}-a_{2 m+1}\big] w^{2 m}+\cdots .
\end{multline}

Comparing the corresponding coefficients of (\ref{2.11}) and (\ref{2.12}), yields

\begin{equation}\label{2.13}
\frac{\big(1+m(\lambda+\gamma)+\lambda \gamma\big((m+1)^{2}+1\big)\big)(\delta+1)}{\tau} a_{m+1}=\alpha p_{m},
\end{equation}

\begin{equation}\label{2.14}
\frac{\big(1+2 m(\lambda+\gamma)+\lambda \gamma\big((2 m+1)^{2}+1\big)\big)(\delta+1)(\delta+2)}{2 \tau} a_{2 m+1}=\frac{\alpha(\alpha-1)}{2} p_{m}^{2}+\alpha p_{2 m},
\end{equation}

\begin{equation}\label{2.15}
-\frac{\big(1+m(\lambda+\gamma)+\lambda \gamma\big((m+1)^{2}+1\big)\big)(\delta+1)}{\tau} a_{m+1}=\alpha q_{m},
\end{equation}

\noindent and

\begin{multline} \label{2.16}
\frac{\left(1+2 m(\lambda+\gamma)+\lambda \gamma\left((2 m+1)^{2}+1\right)\right)(\delta+1)(\delta+2)}{2 \tau}\left[(m+1) a_{m+1}^{2}-a_{2 m+1}\right]=\\\frac{\alpha(\alpha-1)}{2} q_{m}^{2}+\alpha q_{2 m}.
\end{multline}

In view of (\ref{2.13}) and (\ref{2.15}), we find that

\begin{equation} \label{2.17}
p_{m}=-q_{m},
\end{equation}

\noindent and

\begin{equation}\label{2.18}
\frac{2\left(1+m(\lambda+\gamma)+\lambda \gamma\left((m+1)^{2}+1\right)\right)^{2}(\delta+1)^{2}}{\tau^{2}} a_{m+1}^{2}=\alpha^{2}\left(p_{m}^{2}+q_{m}^{2}\right).
\end{equation}

Adding (\ref{2.14}) to (\ref{2.16}) and substituting the value of $p_{m}^{2}+q_{m}^{2}$ form (\ref{2.18}) we obtain

\begin{multline}\label{2.19}
\frac{(\delta+1)(\delta+2)(m+1) \Phi_{1}(\lambda, \gamma, m)}{2 \tau} a_{m+1}^{2}=\\\frac{(\alpha-1)(\delta+1)^{2} \Phi_{2}(\lambda, \gamma, m)}{\tau^{2} \alpha} a_{m+1}^{2}+\alpha\left(p_{2 m}+q_{2 m}\right).
\end{multline}

Further computations on (\ref{2.19}) we get

\begin{equation} \label{2.20}
a_{m+1}^{2}=\frac{2 \tau^{2} \alpha^{2}\left(p_{2 m}+q_{2 m}\right)}{(\delta+1)\left[\tau \alpha(\delta+2)(m+1) \Phi_{1}(\lambda, \gamma, m)+2(1-\alpha)(\delta+1) \Phi_{2}(\lambda, \gamma, m)\right]}.
\end{equation}

Taking the absolute value of (\ref{2.20}) and applying Lemma \ref{lem1} for the coefficients $p_{2 m}$ and $q_{2 m}$, we deduce that

$$
\left|a_{m+1}\right| \leqq \frac{2 \sqrt{2}|\tau| \alpha}{\sqrt{(\delta+1)\left|\tau \alpha(\delta+2)(m+1) \Phi_{1}(\lambda, \gamma, m)+2(1-\alpha)(\delta+1) \Phi_{2}(\lambda, \gamma, m)\right|}}.
$$

Next, in order to determine the bound on $\left|a_{2 m+1}\right|$, by subtracting (\ref{2.16}) from (\ref{2.14}), we obtain

\begin{multline}\label{2.21}
\frac{\left(1+2 m(\lambda+\gamma)+\lambda \gamma\left((2 m+1)^{2}+1\right)\right)(\delta+1)(\delta+2)}{\tau} a_{2 m+1} -\\
\frac{\left(1+2 m(\lambda+\gamma)+\lambda \gamma\left((2 m+1)^{2}+1\right)\right)(\delta+1)(\delta+2)(m+1)}{2 \tau} a_{m+1}^{2}= \\
\frac{\alpha(\alpha-1)}{2}\left(p_{m}^{2}-q_{m}^{2}\right)+\alpha\left(p_{2 m}-q_{2 m}\right).
\end{multline}

Now, substituting the value of $a_{m+1}^{2}$ from (\ref{2.18}) into (\ref{2.21}) and using (\ref{2.17}), we conclude that

\begin{equation}\label{2.22}
a_{2 m+1}=\frac{\tau \alpha\left(p_{2 m}-q_{2 m}\right)}{2(\delta+1)(\delta+2) \Phi_{1}(\lambda, \gamma, m)}+\frac{\tau^{2} \alpha^{2}(m+1)\left(p_{m}^{2}+q_{m}^{2}\right)}{4(\delta+1)^{2} \Phi_{2}(\lambda, \gamma, m)}.
\end{equation}

Finally, taking the absolute value of (\ref{2.22}) and applying Lemma \ref{lem1} once again for the coefficients $p_{m}, p_{2 m}, q_{m}$ and $q_{2 m}$, we deduce that

$$
\left|a_{2 m+1}\right| \leqq \frac{2|\tau| \alpha}{(\delta+1)(\delta+2) \Phi_{1}(\lambda, \gamma, m)}+\frac{2|\tau|^{2} \alpha^{2}(m+1)}{(\delta+1)^{2} \Phi_{2}(\lambda, \gamma, m)}.
$$

This completes the proof.
\end{proof}

\section{Coefficient Bounds for the Function Class ${\Theta}_{\Sigma_{m}}(\tau, \lambda, \gamma, {\delta} ; {\beta})$}

In this section, we assume that

$$
\lambda \geqq 0, \; 0 \leqq \gamma \leqq 1, \; 0 \leqq \beta<1, \; \tau \in \mathbb{C} \backslash\{0\}, \; \delta \in \mathbb{N}_{0} \; \text { and } \; m \in \mathbb{N}.
$$

If $\mathcal{L}(z)$ be any complex-valued function such that $\mathcal{L}(z)=\beta+(1-\beta) h(z)$, then

$$
\operatorname{Re}\{\mathcal{L}(z)\}=\beta+(1-\beta) \operatorname{Re}\{h(z)\}>\beta.
$$

\begin{Definition} A function $f \in \Sigma_{m}$ given by (\ref{1.5}) is called in the class $\Theta_{\Sigma_{m}}(\tau, \lambda, \gamma, \delta ; \beta)$ if it satisfies the conditions:

\begin{multline}\label{3.1}
\operatorname{Re}\left\{1+\frac{1}{\tau}\left[(1-\lambda)(1-\gamma) \frac{\mathcal{R}^{\delta} f(z)}{z}+(\lambda(\gamma+1)+\gamma)\left(\mathcal{R}^{\delta} f(z)\right)^{\prime}+\right.\right.\\
\left.\left.\lambda \gamma\left(z\left(\mathcal{R}^{\delta} f(z)\right)^{\prime \prime}-2\right)-1\right]\right\}>\beta,
\end{multline}

\noindent and

\begin{multline}\label{3.2}
\operatorname{Re}\left\{1+\frac{1}{\tau}\left[(1-\lambda)(1-\gamma) \frac{\mathcal{R}^{\delta} g(w)}{z}+(\lambda(\gamma+1)+\gamma)\left(\mathcal{R}^{\delta} g(w)\right)^{\prime}+\right.\right.\\
\left.\left.\lambda \gamma\left(w\left(\mathcal{R}^{\delta} g(w)\right)^{\prime \prime}-2\right)-1\right]\right\}>\beta,
\end{multline}

\noindent where $z, w \in \mathbb{U}$ and the function $g=f^{-1}$ is given by (\ref{1.7}).
\end{Definition}

\begin{Theorem}\label{th2}

Let $f \in \Theta_{\Sigma_{m}}(\tau, \lambda, \gamma, \delta ; \beta)$ be given by (\ref{1.5}). Then

\begin{multline}\label{3.3}
\left|a_{m+1}\right| \leqq \min \left\{\frac{2|\tau|(1-\beta)}{(\delta+1)\left(1+m(\lambda+\gamma)+\lambda \gamma\left((m+1)^{2}+1\right)\right)},\right.\\\left. 2 \sqrt{\frac{2|\tau|(1-\beta)}{(\delta+1)(\delta+2)(m+1) \Phi_{1}(\lambda, \gamma, m)}}\right\},
\end{multline}

\noindent and

\begin{equation}\label{3.4}
\begin{aligned}
\left|a_{2 m+1}\right| \leqq \frac{4|\tau|(1-\beta)}{(\delta+1)(\delta+2)\left(1+2(\lambda+\gamma) m+\lambda \gamma\left((2 m+1)^{2}+1\right)\right)}.
\end{aligned}
\end{equation}

\end{Theorem}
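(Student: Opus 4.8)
The plan is to mirror the argument used for Theorem~\ref{th1}, the only structural change being that the positive–real–part condition linearizes the right-hand side. First I would invoke the remark preceding the definition: since the bracketed expressions in \eqref{3.1} and \eqref{3.2} have real part exceeding $\beta$, there exist $p,q\in\mathcal{P}$ with the $m$-fold symmetric expansions \eqref{2.7} and \eqref{2.8} such that the expression in \eqref{3.1} equals $\beta+(1-\beta)p(z)$ and the one in \eqref{3.2} equals $\beta+(1-\beta)q(w)$. The left-hand sides then expand exactly as in \eqref{2.11} and \eqref{2.12}, because the operator $\mathcal{R}^{\delta}$ and the linear combination are unchanged; the right-hand sides now expand simply as $1+(1-\beta)p_m z^m+(1-\beta)p_{2m}z^{2m}+\cdots$ and its $q$-analogue, with none of the quadratic $[p(z)]^{\alpha}$-type corrections that appeared in the first theorem.

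Next I would compare coefficients of $z^m,z^{2m}$ and $w^m,w^{2m}$ to obtain the four relations analogous to \eqref{2.13}--\eqref{2.16}, now carrying $(1-\beta)p_m$, $(1-\beta)p_{2m}$, $(1-\beta)q_m$, $(1-\beta)q_{2m}$ on the right. From the two $m$-level relations I would read off $p_m=-q_m$ and, directly, the first candidate bound
\[
|a_{m+1}|\le\frac{2|\tau|(1-\beta)}{(\delta+1)\bigl(1+m(\lambda+\gamma)+\lambda\gamma((m+1)^2+1)\bigr)}
\]
by taking absolute values and applying Lemma~\ref{lem1} with $|p_m|\le2$. For the second candidate I would add the two $2m$-level relations; the $a_{2m+1}$ terms cancel, leaving $a_{m+1}^2$ expressed through $(1-\beta)(p_{2m}+q_{2m})$ and $\Phi_1(\lambda,\gamma,m)$. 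Passing to absolute values with $|p_{2m}|,|q_{2m}|\le2$ yields the square-root estimate, and taking the minimum of the two gives \eqref{3.3}.

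For $|a_{2m+1}|$ the linearization pays off at once: the $f$-side $2m$-level relation expresses $a_{2m+1}$ directly in terms of $(1-\beta)p_{2m}$, with none of the $p_m^2$ contamination that forced an extra summand in Theorem~\ref{th1}. Equivalently, I would subtract the two $2m$-level relations and then substitute the value of $a_{m+1}^2$ coming from the \emph{added} relations (not from the $p_m^2+q_m^2$ identity); the two contributions collapse to $2p_{2m}$, and Lemma~\ref{lem1} then delivers \eqref{3.4}.

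The computations themselves are routine once the setup is fixed, so the only genuine obstacle is a bookkeeping one: keeping the bulky bracketed coefficients aligned with the compact forms $\Phi_1$ and $\Phi_2$, and---crucially---choosing the correct expression for $a_{m+1}^2$ when isolating $a_{2m+1}$. Substituting the $p_m^2+q_m^2$ identity would produce a strictly larger, non-matching bound; the clean estimate \eqref{3.4} emerges only from the added-equation form of $a_{m+1}^2$ (or, most transparently, by reading $a_{2m+1}$ off the $f$-relation alone).
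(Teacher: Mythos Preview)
Your proposal is correct and follows essentially the same route as the paper: set up the linearized relations $\beta+(1-\beta)p(z)$ and $\beta+(1-\beta)q(w)$, compare coefficients to get the analogues of \eqref{2.13}--\eqref{2.16}, obtain the two $|a_{m+1}|$ bounds from the $m$-level relation and from adding the $2m$-level relations, and then get \eqref{3.4} by substituting the added-equation form of $a_{m+1}^2$ into the subtracted relation (equivalently, reading $a_{2m+1}$ directly from the $f$-side $2m$-level relation). Your cautionary remark about the $p_m^2+q_m^2$ substitution is apt---the paper in fact computes that weaker bound as well but records only the sharper one in the theorem.
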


\begin{proof}
It follows from (\ref{3.1}) and (\ref{3.2}) that

\begin{multline}\label{3.5}
1+\frac{1}{\tau}\left[(1-\lambda)(1-\gamma) \frac{\mathcal{R}^{\delta} f(z)}{z}+(\lambda(\gamma+1)+\gamma)\left(\mathcal{R}^{\delta} f(z)\right)^{\prime}+\right.\\
\left.\lambda \gamma\left(z\left(\mathcal{R}^{\delta} f(z)\right)^{\prime \prime}-2\right)-1\right]
=\beta+(1-\beta) p(z)
\end{multline}

\noindent and

\begin{multline}\label{3.6}
1+\frac{1}{\tau}\left[(1-\lambda)(1 -\gamma) \frac{\mathcal{R}^{\delta} g(w)}{z}+(\lambda(\gamma+1)+\gamma)\left(\mathcal{R}^{\delta} g(w)\right)^{\prime}+\right.\\
\left.\lambda \gamma\left(w\left(\mathcal{R}^{\delta} g(w)\right)^{\prime \prime}-2\right)-1\right]
 =\beta+(1-\beta) q(z)
\end{multline}

\noindent where $p(z)$ and $q(w)$ have the forms (\ref{2.7}) and (\ref{2.8}), respectively.

Clearly, we have

\begin{equation}\label{3.7}
\beta+(1-\beta) p(z)=1+(1-\beta) p_{m} z^{m}+(1-\beta) p_{2 m} z^{2 m}+(1-\beta) p_{3 m} z^{3 m}+\cdots
\end{equation}

\noindent and

\begin{equation}\label{3.8}
\beta+(1-\beta) q(w)=1+(1-\beta) q_{m} w^{m}+(1-\beta) q_{2 m} w^{2 m}+(1-\beta) q_{3 m} w^{3 m}+\cdots.
\end{equation}

Equating the corresponding coefficients of (\ref{3.5}) and (\ref{3.6}) yields

\begin{equation}\label{3.9}
\frac{\left(1+m(\lambda+\gamma)+\lambda \gamma\left((m+1)^{2}+1\right)\right)(\delta+1)}{\tau} a_{m+1}=(1-\beta) p_{m},
\end{equation}

\begin{equation}\label{3.10}
\frac{\left(1+2 m(\lambda+\gamma)+\lambda \gamma\left((2 m+1)^{2}+1\right)\right)(\delta+1)(\delta+2)}{2 \tau} a_{2 m+1}=(1-\beta) p_{2 m},
\end{equation}

\begin{equation}\label{3.11}
-\frac{\left(1+m(\lambda+\gamma)+\lambda \gamma\left((m+1)^{2}+1\right)\right)(\delta+1)}{\tau} a_{m+1}=(1-\beta) q_{m},
\end{equation}

\noindent and

\begin{equation} \label{3.12}
\frac{\left(1+2 m(\lambda+\gamma)+\lambda \gamma\left((2 m+1)^{2}+1\right)\right)(\delta+1)(\delta+2)}{2 \tau}\left[(m+1) a_{m+1}^{2}-a_{2 m+1}\right]=(1-\beta) q_{2 m}.
\end{equation}

In view of (\ref{3.9}) and (\ref{3.11}), we find that

\begin{equation} \label{3.13}
\begin{aligned}
p_{m}=-q_{m},
\end{aligned}
\end{equation}

\noindent and

\begin{equation} \label{3.14}
\begin{aligned}
\frac{2\left(1+m(\lambda+\gamma)+\lambda \gamma\left((m+1)^{2}+1\right)\right)^{2}(\delta+1)^{2}}{\tau^{2}} a_{m+1}^{2}=(1-\beta)^{2}\left(p_{m}^{2}+q_{m}^{2}\right).
\end{aligned}
\end{equation}

Adding (\ref{3.10}) to (\ref{3.12}), we obtain

\begin{multline} \label{3.15}
\frac{\left(1+2 m(\lambda+\gamma)+\lambda \gamma\left((2 m+1)^{2}+1\right)\right)(m+1)(\delta+1)(\delta+2)}{2 \tau} a_{m+1}^{2}=\\
(1-\beta)\left(p_{2 m}+q_{2 m}\right) .
\end{multline}

Hence, we find from (\ref{3.14}) and (\ref{3.15}) that

\begin{equation} \label{3.16}
\begin{aligned}
a_{m+1}^{2}=\frac{\tau^{2}(1-\beta)^{2}\left(p_{m}^{2}+q_{m}^{2}\right)}{2(\delta+1)^{2}\left(1+m(\lambda+\gamma)+\lambda \gamma\left((m+1)^{2}+1\right)\right)^{2}},
\end{aligned}
\end{equation}

\noindent and

\begin{equation}\label{3.17}
\begin{aligned}
a_{m+1}^{2}=\frac{2 \tau(1-\beta)\left(p_{2 m}+q_{2 m}\right)}{(\delta+1)(\delta+2)(m+1)\left(1+2(\lambda+\gamma) m+\lambda \gamma\left((2 m+1)^{2}+1\right)\right)},
\end{aligned}
\end{equation}

\noindent respectively. By taking the absolute value of (\ref{3.16}) and (\ref{3.17}) and applying Lemma \ref{lem1} for the coefficients $p_{m}, p_{2 m}, q_{m}$ and $q_{2 m}$, we deduce that

$$
\left|a_{m+1}\right| \leqq \frac{2|\tau|(1-\beta)}{(\delta+1)\left(1+(\lambda+\gamma) m+\lambda \gamma\left((m+1)^{2}+1\right)\right)},
$$

\noindent and

$$
\left|a_{m+1}\right| \leqq 2 \sqrt{\frac{2|\tau|(1-\beta)}{(\delta+1)(\delta+2)(m+1)\left(1+2(\lambda+\gamma) m+\lambda \gamma\left((2 m+1)^{2}+1\right)\right)^{2}}},
$$

\noindent respectively. To determine the bound on $\left|a_{2 m+1}\right|$, by subtracting (\ref{3.12}) from (\ref{3.10}), we get

\begin{multline}\label{3.18}
\frac{(\delta+1)(\delta+2)\left(1+2 m(\lambda+\gamma)+\lambda \gamma\left((2 m+1)^{2}+1\right)\right)}{\tau} a_{2 m+1}- \\
\frac{(\delta+1)(\delta+2)(m+1)\left(1+2 m(\lambda+\gamma)+\lambda \gamma\left((2 m+1)^{2}+1\right)\right)}{2 \tau} a_{m+1}^{2}= \\
(1-\beta)\left(p_{2 m}-q_{2 m}\right).
\end{multline}

Upon substituting the value of $a_{m+1}^{2}$ from (\ref{3.16}) and (\ref{3.17}) into (\ref{3.18}), we conclude that

\begin{multline}\label{3.19}
a_{2 m+1}=\frac{\tau^{2}(1-\beta)^{2}(m+1)\left(p_{m}^{2}+q_{m}^{2}\right)}{4(\delta+1)^{2}\left(1+m(\lambda+\gamma)+\lambda \gamma\left((m+1)^{2}+1\right)\right)^{2}}+\\
\frac{\tau(1-\beta)\left(p_{2 m}-q_{2 m}\right)}{(\delta+1)(\delta+2) \Phi_{1}(\lambda, \gamma, m)}
\end{multline}

\noindent and

\begin{equation}\label{3.20}
\begin{aligned}
a_{2 m+1}=\frac{2 \tau(1-\beta) p_{2 m}}{(\delta+1)(\delta+2)\left(1+2(\lambda+\gamma) m+\lambda \gamma\left((2 m+1)^{2}+1\right)\right)}.
\end{aligned}
\end{equation}

Now, taking the absolute value of (\ref{3.19}) and (\ref{3.20}) and applying Lemma \ref{lem1} once again for the coefficients $p_{m}, p_{2 m}, q_{m}$ and $q_{2 m}$, we deduce that

$$
\left|a_{2 m+1}\right| \leqq \frac{2|\tau|^{2}(1-\beta)^{2}(m+1)}{(\delta+1)^{2}\left(1+(\lambda+\gamma) m+\lambda \gamma\left((m+1)^{2}+1\right)\right)^{2}}+\frac{4|\tau|(1-\beta)}{(\delta+1)(\delta+2) \Phi_{1}(\lambda, \gamma, m)},
$$

\noindent and

$$
\left|a_{2 m+1}\right| \leqq \frac{4|\tau|(1-\beta)}{(\delta+1)(\delta+2)\left(1+2(\lambda+\gamma) m+\lambda \gamma\left((2 m+1)^{2}+1\right)\right)},
$$

\noindent respectively. This completes the proof.

\end{proof}

\section{Corollaries and Consequences}

This section is understanding to the demonstration of some special cases of the definitions and theorems. These results are given in the form of remarks and corollaries.

\begin{Remark} It should be noted that the classes $Q_{\Sigma_{m}}(\tau, \lambda, \gamma, \delta ; \alpha)$ and $\Theta_{\Sigma_{m}}(\tau, \lambda, \gamma, \delta ; \beta)$ are generalizations of well-known classes consider earlier. These classes are:
\begin{enumerate}
\item For $\delta=\gamma=0$ and $\tau=\lambda=1$, the classes $\mathcal{Q}_{\Sigma_{m}}(\tau, \lambda, \gamma, \delta ; \alpha)$ and $\Theta_{\Sigma_{m}}(\tau, \lambda, \gamma, \delta ; \beta)$ reduces to the classes $\mathcal{H}_{\Sigma, m}^{\alpha}$ and $\mathcal{H}_{\Sigma, m}(\beta)$, respectively, which were given by Srivastava et al. \cite{A15}.

\item For $\delta=\gamma=0$ and $\tau=1$, the classes $\mathcal{Q}_{\Sigma_{m}}(\tau, \lambda, \gamma, \delta ; \alpha)$ and $\Theta_{\Sigma_{m}}(\tau, \lambda, \gamma, \delta ; \beta)$ reduces to the classes $\mathcal{A}_{\Sigma, m}^{\alpha, \lambda}$ and $\mathcal{A}_{\Sigma, m}^{\lambda}(\beta)$, respectively, which were investigated recently by Eker \cite{A22}.

\item For $\delta=0$, the classes $Q_{\Sigma_{m}}(\tau, \lambda, \gamma, \delta ; \alpha)$ and $\Theta_{\Sigma_{m}}(\tau, \lambda, \gamma, \delta ; \beta)$ reduces to the classes $\mathcal{W} \mathcal{S}_{\Sigma_{m}}(\lambda, \gamma, \tau ; \alpha)$ and $\mathcal{W} \mathcal{S}_{\Sigma_{m}}^{*}(\lambda, \gamma, \tau ; \beta)$, respectively, which were considered recently by Srivastava and Wanas \cite{A20}.

\item For $\delta=\gamma=0$, the classes $\mathcal{Q}_{\Sigma_{m}}(\tau, \lambda, \gamma, \delta ; \alpha)$ and $\Theta_{\Sigma_{m}}(\tau, \lambda, \gamma, \delta ; \beta)$ reduces to the classes $\mathcal{B}_{\Sigma_{m}}(\tau, \lambda ; \alpha)$ and $\mathcal{B}_{\Sigma_{m}}^{*}(\tau, \lambda ; \beta)$, respectively, which were introduced recently by Srivastava et al. \cite{A21}.

\end{enumerate}
\end{Remark}

\begin{Remark}
In Theorem \ref{th1}, if we choose
\begin{enumerate}
\item $\delta=0$, then we obtain the results which was proven by Srivastava and Wanas \cite[ Theorem 2.1]{A20}.

\item $\delta=0$ and $\gamma=0$, then we obtain the results which was given by Srivastava et al. \cite[Theorem 2.1]{A21}.

\item $\delta=0, \gamma=0$ and $\tau=1$, then we obtain the results which was obtained by Eker \cite[Theorem 1]{A22}.

\item $\delta=0, \gamma=0, \lambda=1$ and $\tau=1$, then we obtain the results which was proven by Srivastava et al. \cite[Theorem 2]{A15}.

\end{enumerate}

\end{Remark}

By taking $\delta=0$ in Theorem \ref{th2}, we conclude the following result.

\begin{Corollary} \label{cor1} Let $f \in \Theta_{\Sigma_{m}}(\tau, \lambda, \gamma ; \beta)$ be given by \eqref{1.5}. Then

$$\tiny{
\begin{aligned}
&\left|a_{m+1}\right| \leqq \min \left\{\frac{2|\tau|(1-\beta)}{1+m(\lambda+\gamma)+\lambda \gamma\left((m+1)^{2}+1\right)},  2 \sqrt{\frac{|\tau|(1-\beta)}{(m+1)\left(1+2 m(\lambda+\gamma)+\lambda \gamma\left((2 m+1)^{2}+1\right)\right)}}\right\}
\end{aligned}}
$$
and

$$
\left|a_{2 m+1}\right| \leqq \frac{2|\tau|(1-\beta)}{1+2 m(\lambda+\gamma)+\lambda \gamma\left((2 m+1)^{2}+1\right)}.
$$
\end{Corollary}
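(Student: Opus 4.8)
The plan is to obtain the corollary as the $\delta=0$ specialization of Theorem~\ref{th2}, so no new argument is needed beyond a careful simplification. First I would observe that the $m$-fold Ruscheweyh operator trivializes at $\delta=0$: since $\Omega(0,k)=\Gamma(k)/\bigl(\Gamma(k)\Gamma(1)\bigr)=1$, one has $\mathcal{R}^{0}f=f$, and the three-parameter class $\Theta_{\Sigma_{m}}(\tau,\lambda,\gamma;\beta)$ is by definition the same as $\Theta_{\Sigma_{m}}(\tau,\lambda,\gamma,0;\beta)$. Consequently every hypothesis of Theorem~\ref{th2} is met with $\delta=0$, and it remains only to evaluate the two bounds at this value.

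Next I would substitute $\delta=0$ into the estimates \eqref{3.3} and \eqref{3.4}, using $\delta+1=1$ and $\delta+2=2$. In the bound for $|a_{m+1}|$, the first candidate in the minimum loses its $(\delta+1)$ factor and collapses directly to $2|\tau|(1-\beta)/\bigl(1+m(\lambda+\gamma)+\lambda\gamma((m+1)^{2}+1)\bigr)$. In the second candidate the product $(\delta+1)(\delta+2)=2$ appearing under the radical cancels the factor $2$ in the numerator, leaving $2\sqrt{|\tau|(1-\beta)/\bigl((m+1)\Phi_{1}(\lambda,\gamma,m)\bigr)}$; unfolding $\Phi_{1}(\lambda,\gamma,m)=1+2(\lambda+\gamma)m+\lambda\gamma((2m+1)^{2}+1)$ then matches the displayed form. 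Since each candidate specializes independently, the minimum of the two specialized quantities equals the specialization of the minimum, so \eqref{3.3} passes to the stated $|a_{m+1}|$ bound.

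For $|a_{2m+1}|$, substituting $\delta=0$ into \eqref{3.4} makes the denominator factor $(\delta+1)(\delta+2)$ equal to $2$, so the $4$ in the numerator halves to $2$ and produces the claimed estimate. Because the corollary is a pure specialization, there is no genuine analytic obstacle; the only point requiring care is the bookkeeping of where the factors of $2$ cancel and the verification that the abbreviation $\Phi_{1}$ expands to the explicit polynomial written in the statement.
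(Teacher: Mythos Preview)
Your proposal is correct and follows exactly the paper's own route: the paper simply writes ``By taking $\delta=0$ in Theorem~\ref{th2}, we conclude the following result,'' and your write-up just makes the arithmetic of that specialization explicit. The only minor slip is that in the $m$-fold Ruscheweyh operator the coefficient is $\Gamma(\delta+k+1)/\bigl(\Gamma(k+1)\Gamma(\delta+1)\bigr)$ rather than $\Omega(\delta,k)$, but at $\delta=0$ it still evaluates to $1$, so your conclusion $\mathcal{R}^{0}f=f$ and the rest of the argument are unaffected.
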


\begin{Remark}
 The bounds on $\left|a_{m+1}\right|$ and $\left|a_{2 m+1}\right|$ given in Corollary \ref{cor1} are better than those given in \cite[Theorem 3.1]{A20}. 


\end{Remark}

By taking $\gamma=0$ in Corollary \ref{cor1}, we conclude the following result.

\begin{Corollary}\label{cor2}
 Let $f \in \Theta_{\Sigma_{m}}(\tau, \lambda ; \beta)$ be given by \eqref{1.5}. Then

$$
\left|a_{m+1}\right| \leqq \min \left\{\frac{2|\tau|(1-\beta)}{1+m \lambda}, 2 \sqrt{\frac{|\tau|(1-\beta)}{(m+1)(1+2 m \lambda)}}\right\}
$$
and

$$
\left|a_{2 m+1}\right| \leqq \frac{2|\tau|(1-\beta)}{1+2 m \lambda}.
$$
\end{Corollary}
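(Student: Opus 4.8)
The plan is to derive Corollary~\ref{cor2} as the direct specialization $\gamma=0$ of Corollary~\ref{cor1}, exactly mirroring the reduction that produced Corollary~\ref{cor1} from Theorem~\ref{th2} by setting $\delta=0$. First I would record that $\Theta_{\Sigma_m}(\tau,\lambda;\beta)$ is nothing other than $\Theta_{\Sigma_m}(\tau,\lambda,\gamma;\beta)$ evaluated at $\gamma=0$: putting $\gamma=0$ in the defining conditions \eqref{3.1} and \eqref{3.2} turns the factor $(1-\gamma)$ into $1$ and annihilates the term $\lambda\gamma\big(z(\mathcal{R}^\delta f(z))''-2\big)$, so that the hypotheses of Corollary~\ref{cor1} continue to hold verbatim. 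Consequently any $f\in\Theta_{\Sigma_m}(\tau,\lambda;\beta)$ inherits the bounds of Corollary~\ref{cor1} with $\gamma=0$ substituted throughout.

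The second step is the elementary algebraic simplification. Because the product $\lambda\gamma$ vanishes when $\gamma=0$, the factor $1+m(\lambda+\gamma)+\lambda\gamma((m+1)^2+1)$ reduces to $1+m\lambda$, while $\Phi_1(\lambda,\gamma,m)=1+2m(\lambda+\gamma)+\lambda\gamma((2m+1)^2+1)$ reduces to $1+2m\lambda$. Inserting these into the two arguments of the minimum in Corollary~\ref{cor1} reproduces the stated pair $\tfrac{2|\tau|(1-\beta)}{1+m\lambda}$ and $2\sqrt{\tfrac{|\tau|(1-\beta)}{(m+1)(1+2m\lambda)}}$, and inserting the second simplification into the bound for $|a_{2m+1}|$ yields $\tfrac{2|\tau|(1-\beta)}{1+2m\lambda}$.

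The honest assessment is that there is no substantive obstacle: the entire content of the argument is the substitution $\gamma=0$ together with the identity $\lambda\cdot 0=0$. The only place meriting a moment's attention is checking that the radical in the second entry of the minimum simplifies as claimed, but this is immediate once the $\lambda\gamma$ contributions are dropped. I would therefore present the proof in two lines, citing Corollary~\ref{cor1} and performing the substitution.
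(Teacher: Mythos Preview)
Your proposal is correct and matches the paper's own derivation: the paper obtains Corollary~\ref{cor2} precisely by taking $\gamma=0$ in Corollary~\ref{cor1}, which is exactly the substitution and simplification you describe.
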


\begin{Remark}
 The bounds on $\left|a_{m+1}\right|$ and $\left|a_{2 m+1}\right|$ given in Corollary \ref{cor2} are better than those given in \cite[Theorem 3.1]{A21}. 



\end{Remark}

By setting $\gamma=0$ and $\tau=1$ in Corollary \ref{cor1}, we conclude the following result.

\begin{Corollary} \label{cor3} Let $f \in \Theta_{\Sigma_{m}}(\lambda ; \beta)$ be given by \eqref{1.5}. Then

$$
\left|a_{m+1}\right| \leqq \min \left\{\frac{2(1-\beta)}{1+m \lambda}, 2 \sqrt{\frac{(1-\beta)}{(m+1)(1+2 m \lambda)}}\right\}
$$
and

$$
\left|a_{2 m+1}\right| \leqq \frac{2(1-\beta)}{1+2 m \lambda}.
$$
\end{Corollary}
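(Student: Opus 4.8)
The plan is to obtain Corollary~\ref{cor3} purely as a specialization, since the class $\Theta_{\Sigma_{m}}(\lambda;\beta)$ is by definition $\Theta_{\Sigma_{m}}(\tau,\lambda,\gamma;\beta)$ with $\gamma=0$ and $\tau=1$. First I would invoke the bounds already established in Corollary~\ref{cor1}, which hold for every $f\in\Theta_{\Sigma_{m}}(\tau,\lambda,\gamma;\beta)$, and then substitute $\gamma=0$ and $\tau=1$ directly into those estimates. The only computation required is to check how the two $\gamma$-dependent factors collapse: setting $\gamma=0$ annihilates the product term $\lambda\gamma\big((m+1)^2+1\big)$ in the denominator of the first candidate for $|a_{m+1}|$, leaving $1+m\lambda$, and likewise reduces $1+2m(\lambda+\gamma)+\lambda\gamma\big((2m+1)^2+1\big)$ to $1+2m\lambda$ in both the second candidate and in the bound for $|a_{2m+1}|$.

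After these simplifications, Corollary~\ref{cor1} yields exactly the bounds of Corollary~\ref{cor2} (equivalently, one may simply take $\tau=1$ in Corollary~\ref{cor2}). Imposing $\tau=1$ replaces every factor $|\tau|$ by $1$, so that $\frac{2|\tau|(1-\beta)}{1+m\lambda}$ becomes $\frac{2(1-\beta)}{1+m\lambda}$, the radical $2\sqrt{\frac{|\tau|(1-\beta)}{(m+1)(1+2m\lambda)}}$ becomes $2\sqrt{\frac{(1-\beta)}{(m+1)(1+2m\lambda)}}$, and the bound on $|a_{2m+1}|$ becomes $\frac{2(1-\beta)}{1+2m\lambda}$, which are precisely the asserted estimates.

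Since the statement is obtained by substituting fixed parameter values into an already proven inequality, there is no genuine analytic obstacle; the only point requiring care is the bookkeeping of the $\Phi_{1}(\lambda,\gamma,m)$-type expressions under $\gamma=0$, to be sure that the $\big((m+1)^2+1\big)$ and $\big((2m+1)^2+1\big)$ contributions vanish rather than persist. I would also confirm that the parameter constraints of Corollary~\ref{cor1}—namely $\lambda\geqq 0$, $0\leqq\beta<1$, and $m\in\mathbb{N}$—remain satisfied after the substitution, which they trivially do. The argument therefore reduces to a transparent verification, and no use of Lemma~\ref{lem1} beyond what was already exploited in Theorem~\ref{th2} is needed.
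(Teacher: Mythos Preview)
Your proposal is correct and matches the paper's approach exactly: the paper also derives Corollary~\ref{cor3} simply by setting $\gamma=0$ and $\tau=1$ in Corollary~\ref{cor1}, with no additional argument. Your bookkeeping of how the $\gamma$-dependent terms collapse is accurate.
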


\begin{Remark} The bounds on $\left|a_{m+1}\right|$ and $\left|a_{2 m+1}\right|$ given in Corollary \ref{cor3} are better than those given in \cite[Theorem 2]{A22}. 



\end{Remark}

By setting $\gamma=0$ and $\lambda=\tau=1$ in Corollary \ref{cor1}, we conclude the following result.

\begin{Corollary} \label{cor4} Let $f \in \Theta_{\Sigma_{m}}(\beta)$ be given by \eqref{1.5}. Then

$$
\left|a_{m+1}\right| \leqq \min \left\{\frac{2(1-\beta)}{1+m}, 2 \sqrt{\frac{(1-\beta)}{(m+1)(1+2 m)}}\right\}
$$
and

$$
\left|a_{2 m+1}\right| \leqq \frac{2(1-\beta)}{1+2 m}.
$$
\end{Corollary}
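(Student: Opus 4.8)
The plan is to obtain Corollary \ref{cor4} as a pure parameter specialization of the coefficient bounds already in hand, rather than re-running any coefficient analysis. Following the notational convention established across Corollaries \ref{cor1}--\ref{cor3}, the abbreviated symbol $\Theta_{\Sigma_{m}}(\beta)$ denotes the class $\Theta_{\Sigma_{m}}(\tau,\lambda,\gamma,\delta;\beta)$ in the particular case $\delta=0$, $\gamma=0$, $\lambda=1$ and $\tau=1$. So the first step is simply to record that a function $f\in\Theta_{\Sigma_{m}}(\beta)$ is exactly a function in $\Theta_{\Sigma_{m}}(1,1,0,0;\beta)$, which places us squarely within the hypotheses of Theorem \ref{th2} (and of its $\delta=0$ consequence, Corollary \ref{cor1}).

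The most economical route is to start from Corollary \ref{cor3}, whose bounds already incorporate $\delta=0$, $\gamma=0$ and $\tau=1$ while leaving $\lambda$ free, and then set $\lambda=1$. (Equivalently one may start from Corollary \ref{cor1} and impose $\gamma=0$, $\lambda=1$, $\tau=1$ simultaneously.) I would then carry out the substitution and simplification: with $\gamma=0$ every term carrying the factor $\lambda\gamma$, namely $\lambda\gamma((m+1)^{2}+1)$ and $\lambda\gamma((2m+1)^{2}+1)$, drops out, so that $1+m(\lambda+\gamma)+\lambda\gamma((m+1)^{2}+1)$ collapses to $1+m\lambda$ and $\Phi_{1}(\lambda,\gamma,m)$ collapses to $1+2m\lambda$. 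Setting $\lambda=1$ turns these into $1+m$ and $1+2m$ respectively, and $\tau=1$ gives $|\tau|=1$. Reading off the three resulting expressions yields precisely the claimed bounds, with the $\min$ structure for $|a_{m+1}|$ inherited verbatim from the parent statement.

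There is no substantive obstacle here: the argument is a mechanical substitution, and the only points warranting a line of care are that the reduction of the class notation is the intended one and that all the denominators remain positive, which is guaranteed by $m\in\mathbb{N}$ together with $0\leqq\beta<1$ (so that $1-\beta>0$ and $1+m,\,1+2m\geqq 3$). If instead a fully self-contained derivation were wanted, I would return to the coefficient identities in the proof of Theorem \ref{th2} --- the analogues of (\ref{3.9})--(\ref{3.12}) with $\delta=0$, $\gamma=0$, $\lambda=\tau=1$ --- use $p_{m}=-q_{m}$, form the two expressions for $a_{m+1}^{2}$ and the expression for $a_{2m+1}$, and apply Lemma \ref{lem1} with $|p_{k}|,|q_{k}|\leqq 2$; this reproduces the same three bounds and confirms the specialization.
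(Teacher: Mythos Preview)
Your proposal is correct and matches the paper's approach exactly: the paper obtains Corollary~\ref{cor4} simply by setting $\gamma=0$ and $\lambda=\tau=1$ in Corollary~\ref{cor1}, which is precisely the specialization you describe. (One inconsequential slip: for $m=1$ one has $1+m=2$, not $\geqq 3$; the positivity you need is of course unaffected.)
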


\begin{Remark} The bounds on $\left|a_{m+1}\right|$ and $\left|a_{2 m+1}\right|$ given in Corollary \ref{cor4} are better than those given in \cite[Theorem 3]{A15}. 




\end{Remark}

\begin{Remark} For 1-fold symmetric bi-univalent functions, the classes $\mathcal{Q}_{\Sigma_{1}}(\tau, \lambda, \gamma, \delta ; \alpha) \equiv Q_{\Sigma}(\tau, \lambda, \gamma, \delta ; \alpha)$ and $\Theta_{\Sigma_{1}}(\tau, \lambda, \gamma, \delta ; \beta) \equiv \Theta_{\Sigma}(\tau, \lambda, \gamma, \delta ; \beta)$ are special cases of these classes illustrated below:

\begin{enumerate}
\item For $\delta=0$, the classes $\mathcal{Q}_{\Sigma}(\tau, \lambda, \gamma, \delta ; \alpha)$ and $\Theta_{\Sigma}(\tau, \lambda, \gamma, \delta ; \beta)$ reduces to the classes $\mathcal{W} \mathcal{S}_{\Sigma}(\lambda, \gamma, \tau ; \alpha)$ and $\mathcal{W} S_{\Sigma}^{*}(\lambda, \gamma, \tau ; \beta)$, respectively, which were introduced recently by Srivastava and Wanas \cite{A20}.

\item For $\delta=\gamma=0$ and $\tau=1$, the classes $Q_{\Sigma}(\tau, \lambda, \gamma, \delta ; \alpha)$ and $\Theta_{\Sigma}(\tau, \lambda, \gamma, \delta ; \beta)$ reduces to the classes $\mathcal{B}_{\Sigma}(\alpha, \lambda)$ and $\mathcal{B}_{\Sigma}(\beta, \lambda)$, respectively, which were investigated recently by Frasin and Aouf \cite{A23}.

\item For $\delta=\gamma=0$ and $\tau=\lambda=1$, the classes $Q_{\Sigma}(\tau, \lambda, \gamma, \delta ; \alpha)$ and $\Theta_{\Sigma}(\tau, \lambda, \gamma, \delta ; \beta)$ reduces to the classes $\mathcal{H}_{\Sigma}(\alpha)$ and $\mathcal{H}_{\Sigma}(\beta)$, respectively, which were given by Srivastava et al. \cite{A6}.

\end{enumerate}
\end{Remark}

For 1-fold symmetric bi-univalent functions, Theorem \ref{th1} reduce to the following corollary:

\begin{Corollary} \label{cor5} Let 
$$f \in \mathcal{Q}_{\Sigma}(\tau, \lambda, \gamma, \delta ; \alpha)\left(\lambda \geqq 0,0 \leqq \gamma \leqq 1,0<\alpha \leqq 1, \tau \in \mathbb{C} \backslash\{0\}, \delta \in \mathbb{N}_{0}\right)$$ be given by \eqref{1.1}. Then

$$
\left|a_{2}\right| \leqq \frac{2|\tau| \alpha}{\sqrt{(\delta+1)\left|\tau \alpha(\delta+2)(1+2(\lambda+\gamma+5 \lambda \gamma))+(1-\alpha)(\delta+1)(1+\lambda+\gamma+5 \lambda \gamma)^{2}\right|}},
$$

\noindent and

$$
\left|a_{3}\right| \leqq \frac{2|\tau| \alpha}{(\delta+1)(\delta+2)(1+2(\lambda+\gamma+5 \lambda \gamma))}+\frac{4|\tau|^{2} \alpha^{2}}{(\delta+1)^{2}(1+\lambda+\gamma+5 \lambda \gamma)^{2}}.
$$
\end{Corollary}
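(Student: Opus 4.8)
The plan is to obtain Corollary \ref{cor5} purely as the $m=1$ specialization of Theorem \ref{th1}, exploiting the fact that a $1$-fold symmetric function is nothing but an ordinary normalized function: setting $m=1$ collapses the expansion \eqref{1.5} into \eqref{1.1} and the inverse expansion \eqref{1.7} into \eqref{1.3}, so that $a_{m+1}=a_{2}$ and $a_{2m+1}=a_{3}$. Consequently no new coefficient comparison is required; I would simply invoke the bounds \eqref{2.3} and \eqref{2.4} with $m=1$ and carry out the resulting algebraic simplification.

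First I would evaluate the auxiliary quantities at $m=1$. A direct substitution gives
$$
\Phi_{1}(\lambda,\gamma,1)=1+2(\lambda+\gamma)+\lambda\gamma\big((2\cdot 1+1)^{2}+1\big)=1+2(\lambda+\gamma+5\lambda\gamma),
$$
and
$$
\Phi_{2}(\lambda,\gamma,1)=\big(1+(\lambda+\gamma)+\lambda\gamma((1+1)^{2}+1)\big)^{2}=(1+\lambda+\gamma+5\lambda\gamma)^{2}.
$$
These two identities are essentially the only genuine computation in the argument.

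Next I would substitute $m=1$ (hence $m+1=2$) together with these expressions into \eqref{2.3}. The factor $m+1=2$ sitting inside the modulus can be pulled out, turning the radicand into $2(\delta+1)\big|\tau\alpha(\delta+2)\Phi_{1}+(1-\alpha)(\delta+1)\Phi_{2}\big|$; the $\sqrt{2}$ thereby produced cancels against the $2\sqrt{2}$ in the numerator, leaving the constant $2$ that appears in the statement. Inserting the explicit forms of $\Phi_{1}$ and $\Phi_{2}$ then yields exactly the claimed bound for $|a_{2}|$. The same substitution into \eqref{2.4}, with $m+1=2$ turning the coefficient $2|\tau|^{2}\alpha^{2}(m+1)$ of the second summand into $4|\tau|^{2}\alpha^{2}$, reproduces the stated bound for $|a_{3}|$.

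Since the entire derivation is a specialization of an already-proved theorem, there is no substantive obstacle beyond careful bookkeeping. The only two points demanding attention are the correct evaluation of $\Phi_{1}$ and $\Phi_{2}$ at $m=1$ (in particular getting the $5\lambda\gamma$ term right from $(2m+1)^{2}+1$ and $(m+1)^{2}+1$), and the tracking of the $\sqrt{2}$ cancellation in the bound for $|a_{2}|$, which must reduce the numerator constant from $2\sqrt{2}$ to $2$.
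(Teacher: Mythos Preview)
Your proposal is correct and follows exactly the paper's approach: the paper obtains Corollary \ref{cor5} simply by specializing Theorem \ref{th1} to $m=1$, and your computation of $\Phi_{1}(\lambda,\gamma,1)$, $\Phi_{2}(\lambda,\gamma,1)$ and the $\sqrt{2}$ cancellation is accurate.
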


\begin{Remark}
 In Corollary \ref{cor5}, if we choose
\begin{enumerate}
\item $\delta=0$, then we obtain the results which was given by Srivastava and Wanas \cite[ Corollary 2.1]{A20}.

\item $\delta=0, \gamma=0$ and $\tau=1$, then we obtain the results which was proven by Frasin and Aouf \cite[Theorem 2.2]{A23}.

\item $\delta=0, \gamma=0, \lambda=1$ and $\tau=1$, then we obtain the results which was obtained by Srivastava et al. \cite[Theorem 1]{A6}.
\end{enumerate}

\end{Remark}

For 1-fold symmetric bi-univalent functions, Theorem \ref{th2} reduce to the following corollary:

\begin{Corollary}\label{cor6} Let 
$$f \in \Theta_{\Sigma}(\tau, \lambda, \gamma, \delta ; \beta)\left(\lambda \geqq 0, 0 \leqq \gamma \leqq 1, 0 \leqq \beta<1, \tau \in \mathbb{C} \backslash\{0\}, \delta \in \mathbb{N}_{0}\right)$$ be given by \eqref{1.1}. Then

$$
\left|a_{2}\right| \leqq \min \left\{\frac{2|\tau|(1-\beta)}{(\delta+1)(1+\lambda+\gamma+5 \lambda \gamma)}, 2 \sqrt{\frac{|\tau|(1-\beta)}{(\delta+1)(\delta+2)(1+2(\lambda+\gamma+5 \lambda \gamma))}}\right\}
$$
and

$$
\left|a_{3}\right| \leqq \frac{4|\tau|(1-\beta)}{(\delta+1)(\delta+2)(1+2(\lambda+\gamma+5 \lambda \gamma))}.
$$
\end{Corollary}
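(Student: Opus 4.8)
The plan is to derive Corollary \ref{cor6} directly as the specialization of Theorem \ref{th2} to the $1$-fold symmetric case, i.e.\ $m=1$. First I would record what each of the building blocks of Theorem \ref{th2} becomes when $m=1$. The quantity $1+m(\lambda+\gamma)+\lambda\gamma((m+1)^{2}+1)$ collapses to $1+(\lambda+\gamma)+5\lambda\gamma$, since $(m+1)^{2}+1=5$ at $m=1$. Likewise $\Phi_{1}(\lambda,\gamma,m)=1+2(\lambda+\gamma)m+\lambda\gamma((2m+1)^{2}+1)$ becomes $1+2(\lambda+\gamma)+10\lambda\gamma=1+2(\lambda+\gamma+5\lambda\gamma)$, because $(2m+1)^{2}+1=10$ at $m=1$. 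With $m=1$ we also have the index identification $a_{m+1}=a_{2}$ and $a_{2m+1}=a_{3}$, and the factor $(m+1)=2$ appears in the second bound of the minimum.

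Next I would substitute these into the two displayed bounds \eqref{3.3} and \eqref{3.4} of Theorem \ref{th2}. The bound on $|a_{m+1}|$ in \eqref{3.3} is a minimum of two expressions; the first term $\frac{2|\tau|(1-\beta)}{(\delta+1)(1+m(\lambda+\gamma)+\lambda\gamma((m+1)^{2}+1))}$ becomes $\frac{2|\tau|(1-\beta)}{(\delta+1)(1+\lambda+\gamma+5\lambda\gamma)}$, matching the first entry of the corollary's minimum. For the second term, $2\sqrt{\frac{2|\tau|(1-\beta)}{(\delta+1)(\delta+2)(m+1)\Phi_{1}(\lambda,\gamma,m)}}$ becomes $2\sqrt{\frac{2|\tau|(1-\beta)}{(\delta+1)(\delta+2)\cdot 2\cdot(1+2(\lambda+\gamma+5\lambda\gamma))}}$, and the factor $2$ in the numerator cancels against the factor $(m+1)=2$ in the denominator to yield $2\sqrt{\frac{|\tau|(1-\beta)}{(\delta+1)(\delta+2)(1+2(\lambda+\gamma+5\lambda\gamma))}}$, exactly the second entry of the corollary's minimum. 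Finally, \eqref{3.4} gives $|a_{2m+1}|\leqq \frac{4|\tau|(1-\beta)}{(\delta+1)(\delta+2)(1+2(\lambda+\gamma)m+\lambda\gamma((2m+1)^{2}+1))}$, which at $m=1$ becomes $\frac{4|\tau|(1-\beta)}{(\delta+1)(\delta+2)(1+2(\lambda+\gamma+5\lambda\gamma))}$, the stated bound on $|a_{3}|$.

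Since everything is a direct substitution $m=1$ into results already established in Theorem \ref{th2}, there is essentially no analytic obstacle: the proof is bookkeeping. The one place to be careful, and the only step that could generate an error, is the cancellation of the $(m+1)$ factor inside the square root producing the clean second term of the minimum; I would verify that the $2$ in the numerator of \eqref{3.3} and the $(m+1)=2$ in its denominator combine correctly so that no stray factor of $2$ survives. Everything else follows by evaluating the two polynomial expressions $(m+1)^{2}+1=5$ and $(2m+1)^{2}+1=10$ and reindexing the coefficients.
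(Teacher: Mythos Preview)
Your proposal is correct and follows exactly the paper's approach: Corollary~\ref{cor6} is obtained by specializing Theorem~\ref{th2} to the case $m=1$, and your careful verification of the substitutions (including the cancellation of the factor $(m+1)=2$ inside the square root) matches what the paper does implicitly.
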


By taking $\delta=0$ in Corollary \ref{cor6}, we have the following result.

\begin{Corollary} \label{cor7} Let 
$$f \in \Theta_{\Sigma}(\tau, \lambda, \gamma ; \beta)(\lambda \geqq 0, 0 \leqq \gamma \leqq 1, 0 \leqq \beta<1, \tau \in \mathbb{C} \backslash\{0\})$$ be given by \eqref{1.1}. Then

$$
\left|a_{2}\right| \leqq \min \left\{\frac{2|\tau|(1-\beta)}{1+\lambda+\gamma+5 \lambda \gamma}, \sqrt{\frac{2|\tau|(1-\beta)}{1+2(\lambda+\gamma+5 \lambda \gamma)}}\right\}
$$
and

$$
\left|a_{3}\right| \leqq \frac{2|\tau|(1-\beta)}{1+2(\lambda+\gamma+5 \lambda \gamma)}.
$$
\end{Corollary}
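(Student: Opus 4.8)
The plan is to obtain this result as a direct specialization of Corollary~\ref{cor6}, which already furnishes the corresponding bounds for an arbitrary $\delta \in \mathbb{N}_{0}$. Since the class $\Theta_{\Sigma}(\tau, \lambda, \gamma; \beta)$ appearing in the statement is precisely $\Theta_{\Sigma}(\tau, \lambda, \gamma, \delta; \beta)$ evaluated at $\delta = 0$, every inequality of Corollary~\ref{cor6} remains valid; it only remains to insert $\delta = 0$ and simplify, using that the arithmetic factors $(\delta+1)$ and $(\delta+2)$ collapse to $1$ and $2$ respectively.

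First I would treat the first entry of the minimum for $|a_2|$. Substituting $\delta = 0$ replaces the factor $(\delta+1)$ in the denominator by $1$, giving immediately
$$\frac{2|\tau|(1-\beta)}{1+\lambda+\gamma+5\lambda\gamma},$$
which is the desired first bound. Next I would simplify the radical entry: with $\delta = 0$ the product $(\delta+1)(\delta+2)$ becomes $2$, so the expression under the square root is $\dfrac{|\tau|(1-\beta)}{2\left(1+2(\lambda+\gamma+5\lambda\gamma)\right)}$. The only genuine manipulation is to absorb the external factor $2$ into the radical via $2\sqrt{x/2} = \sqrt{4x/2} = \sqrt{2x}$, which yields
$$2\sqrt{\frac{|\tau|(1-\beta)}{2\left(1+2(\lambda+\gamma+5\lambda\gamma)\right)}} = \sqrt{\frac{2|\tau|(1-\beta)}{1+2(\lambda+\gamma+5\lambda\gamma)}},$$
matching the second term of the minimum. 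Taking the minimum of the two simplified expressions completes the bound for $|a_{m+1}|=|a_2|$.

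Finally, for $|a_3|$ the same substitution turns $(\delta+1)(\delta+2)$ into $2$, so the numerator $4|\tau|(1-\beta)$ and the denominator factor $2$ cancel to give
$$\frac{4|\tau|(1-\beta)}{2\left(1+2(\lambda+\gamma+5\lambda\gamma)\right)} = \frac{2|\tau|(1-\beta)}{1+2(\lambda+\gamma+5\lambda\gamma)},$$
as claimed. There is no substantive obstacle in this argument, since the statement is a pure specialization of an already-proved corollary; the single point requiring care is the algebraic reduction $2\sqrt{x/2} = \sqrt{2x}$ in the radical term, which I would carry out explicitly to avoid introducing a spurious factor of $\sqrt{2}$.
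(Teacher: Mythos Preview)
Your proposal is correct and follows exactly the paper's own approach: the paper obtains Corollary~\ref{cor7} simply by taking $\delta=0$ in Corollary~\ref{cor6}, and you have carried out precisely this specialization with the requisite algebraic simplifications (including the $2\sqrt{x/2}=\sqrt{2x}$ step). In fact, your write-up is more detailed than the paper's one-line justification.
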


\begin{Remark} The bounds on $\left|a_{2}\right|$ and $\left|a_{3}\right|$ given in Corollary \ref{cor7} are better than those given in \cite[Corollary 3.1]{A20}. 



\end{Remark}

By setting $\delta=\gamma=0$ and $\tau=1$ in Corollary \ref{cor6}, we conclude the following result.

\begin{Corollary} \label{cor8} Let 
$$f \in \Theta_{\Sigma}(\lambda; \beta)(\lambda \geqq 0, 0 \leqq \beta<1)$$ be given by \eqref{1.1}. Then

$$
\left|a_{2}\right| \leqq \min \left\{\frac{2(1-\beta)}{1+\lambda}, \sqrt{\frac{2(1-\beta)}{1+2 \lambda}}\right\}
$$
and

$$
\left|a_{3}\right| \leqq \frac{2(1-\beta)}{1+2 \lambda}.
$$
\end{Corollary}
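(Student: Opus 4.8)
The plan is to obtain this corollary as a direct specialization of Corollary~\ref{cor6}, substituting the parameter values $\delta=0$, $\gamma=0$, and $\tau=1$ into its two displayed bounds and simplifying. No fresh analytic input is required: all of the work in passing from the coefficient relations to the estimates has already been carried out in the proof of Theorem~\ref{th2}, and Corollary~\ref{cor6} is itself the $1$-fold ($m=1$) reduction of that theorem. Hence the only task is careful bookkeeping of how each factor collapses under the chosen parameters, rather than any new inequality.

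First I would record how the common subexpressions simplify. Setting $\gamma=0$ annihilates every term containing the product $\lambda\gamma$, so the factor $1+\lambda+\gamma+5\lambda\gamma$ appearing in Corollary~\ref{cor6} reduces to $1+\lambda$, while $1+2(\lambda+\gamma+5\lambda\gamma)$ reduces to $1+2\lambda$. Setting $\delta=0$ gives $\delta+1=1$ and $\delta+2=2$, and setting $\tau=1$ gives $|\tau|=1$. Substituting these into the first argument of the minimum for $|a_2|$ yields $\dfrac{2(1-\beta)}{1+\lambda}$, which is the first claimed bound.

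For the second argument of the minimum I would substitute likewise to obtain $2\sqrt{\dfrac{(1-\beta)}{2(1+2\lambda)}}$; the only mildly delicate point is to pull the prefactor $2$ inside the radical as $2=\sqrt{4}$ and cancel it against the $2$ in the denominator, producing $\sqrt{\dfrac{2(1-\beta)}{1+2\lambda}}$, exactly the second entry of the minimum in the statement. Finally, the bound on $|a_3|$ becomes $\dfrac{4(1-\beta)}{2(1+2\lambda)}=\dfrac{2(1-\beta)}{1+2\lambda}$ after cancelling the common factor $2$. Since every step is an elementary arithmetic reduction, there is no genuine obstacle here; the sole caution is the square-root manipulation just described, where mishandling the factor of $2$ would introduce a spurious $\sqrt{2}$ and corrupt the stated bound.
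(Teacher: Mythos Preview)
Your proposal is correct and follows exactly the paper's own derivation: the paper states this corollary is obtained ``by setting $\delta=\gamma=0$ and $\tau=1$ in Corollary~\ref{cor6},'' and your substitutions and simplifications (including the square-root manipulation) are all accurate.
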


\begin{Remark} The bounds on $\left|a_{2}\right|$ and $\left|a_{3}\right|$ given in Corollary \ref{cor8} are better than those given in \cite[Theorem 3.2]{A23}. 



\end{Remark}

By setting $\delta=\gamma=0$ and $\lambda=\tau=1$ in Corollary \ref{cor6}, we conclude the following result.

\begin{Corollary}\label{cor9} Let $f \in \Theta_{\Sigma}(\beta)(0 \leqq \beta<1)$ be given by 
\eqref{1.1}. Then

$$
\left|a_{2}\right| \leqq \min \left\{1-\beta, \sqrt{\frac{2(1-\beta)}{3}}\right\}
$$
and

$$
\left|a_{3}\right| \leqq \frac{2(1-\beta)}{3}.
$$
\end{Corollary}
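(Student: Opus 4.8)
The plan is to obtain Corollary~\ref{cor9} directly as a specialization of Corollary~\ref{cor6}, exactly as the statement announces, by substituting $\delta=\gamma=0$ and $\lambda=\tau=1$ into the two bounds already established there. Since Corollary~\ref{cor6} is itself Theorem~\ref{th2} evaluated at $m=1$, no fresh coefficient analysis is needed; the whole argument collapses to evaluating the parameter-dependent numerical factors once these values are inserted, and then reading off the resulting estimates.

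First I would record the relevant factors under the substitution $\delta=0$, $\gamma=0$, $\lambda=1$, $\tau=1$: one has $\delta+1=1$, $\delta+2=2$, $|\tau|=1$, together with $1+\lambda+\gamma+5\lambda\gamma=2$ and $1+2(\lambda+\gamma+5\lambda\gamma)=3$. Inserting these into the first entry of the minimum for $|a_2|$ in Corollary~\ref{cor6} gives $\frac{2(1-\beta)}{1\cdot 2}=1-\beta$, while the second entry becomes $2\sqrt{\frac{1-\beta}{1\cdot 2\cdot 3}}=2\sqrt{\frac{1-\beta}{6}}$, which simplifies to $\sqrt{\frac{2(1-\beta)}{3}}$ after pulling the factor $2$ inside the radical (using $4/6=2/3$). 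This yields the asserted bound $|a_2|\leqq\min\bigl\{\,1-\beta,\ \sqrt{2(1-\beta)/3}\,\bigr\}$.

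Next I would substitute the same values into the $|a_3|$ bound of Corollary~\ref{cor6}, obtaining $\frac{4(1-\beta)}{(\delta+1)(\delta+2)\cdot 3}=\frac{4(1-\beta)}{1\cdot 2\cdot 3}=\frac{2(1-\beta)}{3}$, which is precisely the claimed estimate for $|a_3|$, completing the derivation.

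Honestly there is no genuine obstacle here: the result is a routine parameter specialization, and the only points requiring any care are the algebraic simplification of the square-root term in the $|a_2|$ bound and keeping the minimum intact rather than discarding one of the two competing estimates. Retaining both entries is in fact what matters for the sharpness claimed in the Remark following the corollary, since $1-\beta\leqq\sqrt{2(1-\beta)/3}$ holds exactly when $\beta\geqq 1/3$; thus the two bounds genuinely trade off, and the $\min$ selects the smaller one in each regime.
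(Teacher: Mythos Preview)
Your proposal is correct and follows exactly the paper's own approach: the paper states Corollary~\ref{cor9} as the specialization $\delta=\gamma=0$, $\lambda=\tau=1$ of Corollary~\ref{cor6}, and your arithmetic verifying the three resulting bounds is accurate.
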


\begin{Remark} The bounds on $\left|a_{2}\right|$ and $\left|a_{3}\right|$ given in Corollary \ref{cor9} are better than those given in \cite[Theorem 2]{A6}. 



\end{Remark}



\section*{Data Availability}
Data sharing not applicable to this article as no datasets were generated or analyzed during the current study.

\section*{Funding}
Not applicable.

\section*{Author's contributions} 
Conceptualization, H.M.S., D.B. and P.O.S.; Formal analysis, S.S.E.; Funding acquisition, A.K.W.; Investigation, H.M.S., P.O.S., D.B. and P.O.M.; Methodology, P.O.S, S.S.E. and D.B.; Project administration, A.K.W. and N.C.; Resources, P.O.S. and S.S.E.; Software, P.O.M.; Supervision, P.O.S. and S.S.E.; Validation, A.K.W. and N.C.; Visualization, N.C. and D.B.; Writing – original draft, H.M.S. and P.O.M.; Writing – review \& editing, P.O.S. and N.C. All of the authors read and approved the final manuscript.

\section*{Acknowledgements}
Researchers Supporting Project number (RSP2023R153), King Saud University, Riyadh, Saudi Arabia.

\section*{Declarations}

\noindent {\bf Ethical approval}\; Not applicable.

\noindent {\bf Competing interests}\; The authors declare no competing interests.


\end{document}